\DeclareMathOperator{\tb}{tb}
\DeclareMathOperator{\T}{T}
\newcommand{\Z}{\mathbb{Z}}
\newcommand{\Q}{\mathbb{Q}}
\newtheoremstyle{thm}{.5\baselineskip}{.5\baselineskip}{\itshape}{5mm}
{\scshape}{.}{2mm}{}
\theoremstyle{thm}
\newtheorem{theorem}{Theorem}[section]
\newtheorem{lemma}[theorem]{Lemma}
\newtheorem{definition}[theorem]{Definition}
\newtheoremstyle{rem}{.5\baselineskip}{.5\baselineskip}{}{5mm}
{\scshape}{.}{2mm}{}
\theoremstyle{rem}
\newtheorem{remark}[theorem]{Remark}
\newtheorem{example}[theorem]{Example}
\newtheorem{algorithm}[theorem]{Algorithm}
\renewcommand{\abstractname}{{\bf Abstract.\ }}
\renewenvironment{abstract}{\footnotesize\abstractname}
\renewcommand{\section}{\@startsection{section}{1}{0pt}{\baselineskip} 
{.5\baselineskip}{\centering\normalfont\bfseries}}
\renewcommand{\subsection}{\@startsection{subsection}{1}{7mm}{\baselineskip}
{-3mm}{\bfseries}}
\renewcommand*{\@seccntformat}[1]{%
  \csname the#1\endcsname.\ }
\begin{document}

\pagestyle{fancy}

\thispagestyle{empty}

\begin{center}
{\bf\large COMPUTING THE THURSTON--BENNEQUIN INVARIANT IN OPEN BOOKS
%\\[2mm]
}

\bigskip

{\begin{NoHyper}\small SEBASTIAN DURST$^1$, MARC KEGEL$^2$ and MIRKO KLUKAS$^3$\footnote{\parindent=6mm{\it Key words and phrases:} Thurston--Bennequin invariant, Legendrian knots, open books. 

{\it Mathematics Subject Classification:} Primary: 57R17; Secondary: 57M27, 57R65
}\end{NoHyper}}

\medskip

{\scriptsize $^1$Mathematisches Institut, Universit\"at zu K\"oln, Weyertal 86--90, 50931 K\"oln, Germany \\[-2mm]
e-mail: sdurst@math.uni-koeln.de}

\medskip

{\scriptsize $^2$Mathematisches Institut, Universit\"at zu K\"oln, Weyertal 86--90, 50931 K\"oln, Germany \\[-2mm]
e-mail: mkegel@math.uni-koeln.de}

\medskip

{\scriptsize $^3$Institute of Science and Technology Austria, Am Campus 1, 3400 Klosterneuburg, Austria \\[-2mm]
e-mail: mirko.klukas@gmail.com}
\end{center}

\bigskip

\begin{quotation}
\begin{abstract}
We give explicit formulas and algorithms for the computation of the Thurston--Bennequin invariant of a nullhomologous Legendrian knot on a page of a contact open book and on Heegaard surfaces in convex position. Furthermore, we extend the results to rationally nullhomologous knots in arbitrary $3$-manifolds.
\end{abstract}
\end{quotation}

\section{Introduction}

An important class of knots in contact $3$-manifolds is given by \emph{Legendrian knots}, i.e.\ smooth knots tangent to the contact structure. If a Legendrian knot is nullhomologous the so-called \emph{Thurston--Bennequin invariant} compares the contact framing with the Seifert framing. The Thurston--Bennequin invariants of Legendrian knots in a contact $3$-manifold encode a lot of information about the contact structure. For example, a contact structure is overtwisted if and only if there exists a Legendrian unknot with vanishing Thurston--Bennequin invariant. For this and other basic notions in contact geometry we refer the reader to \cite{Geiges2008}.

For Legendrian knots in the unique tight contact structure of the $3$-sphere there is an easy formula to compute the Thurston--Bennequin invariant out of a \emph{front projection} (see Proposition 3.5.9 in \cite{Geiges2008}).
A natural extension is to consider Legendrian knots in \emph{contact surgery diagrams} and to compute their Thurston--Bennequin invariant in the surgered manifold. 
Starting with the work of Lisca, Ozsv{\'a}th, Stipsicz and Szab{\'o} \cite[Lemma 6.6]{MR2557137} several results were obtained in that setting by Geiges and Onaran \cite[Lemma 2]{MR3338830}, Conway \cite[Lemma 6.4]{Conway2014} and Kegel \cite[Section 8]{Kegel2016}. 

In the light of the Giroux correspondence of open books and contact structures (see \cite{Etnyre2004}) another natural way to present a Legendrian knot is to put it on the page of a \emph{compatible open book} of the contact $3$-manifold.
In \cite{Gay2015} Gay and Licata recently generalised the notion of a front projection of a Legendrian knot to this situation. Among other applications is a formula computing the Thurston--Bennequin invariant of a Legendrian knot given in a generalised front projection.

Here we use a more direct approach. The main results are formulas for:

\begin{itemize}
\item[\bf{A.}] The Thurston--Bennequin invariant of nullhomologous Legendrian knots on Heegaard surfaces in convex position in terms of intersection behaviour with the Heegaard curves (see Theorem \ref{thm:tb_surface}),
\item[\bf{B.}]  The Thurston--Bennequin invariant of nullhomologous Legendrian knots on pages of open books in terms of the monodromy (see Theorem \ref{thm:openbook}).
\end{itemize}

We first state and prove the formula for Heegaard surfaces in Section \ref{section:heegaard}, which we then adapt to the setting of open books in Section \ref{section:openbook}. We follow roughly the steps in \cite{Kegel2016}. First we compute the homology of the knot exterior from the Heegaard diagram and then present contact and Seifert framing in this homology. Comparing these two classes then yields the Thurston--Bennequin invariant.
We furthermore present some examples and applications in Section \ref{section:applications} and extend the obtained results to rationally nullhomologous Legendrian knots in Section \ref{rational}.

The other classical invariant of a Legendrian knot, its \emph{rotation number}, can be computed in surgery diagrams by the results mentioned above. 
It would be nice to have a similar formula for the open book setting.

\section{The Thurston--Bennequin invariant in Heegaard diagrams}
\label{section:heegaard}

Let $(M, \xi)$ be a closed $3$-dimensional contact manifold and fix a contact Heegaard splitting $M = V_1 \cup V_2$, i.e.\ a Heegaard splitting such that the Heegaard surface is convex in the sense of Giroux. In particular, the handlebodies $V_1$ and $V_2$ are not assumed to be standard contact handlebodies. Let $K \subset M$ be a Legendrian knot on $\partial V_1 = \partial V_2$ which is nullhomologous in $M$ and intersects the dividing set $\Gamma$ of the convex Heegaard surface $\partial V_1$ transversely. We denote the number of intersection points by $|K \cap \Gamma|$.
Note that for a given knot in a contact manifold it is always possible to find a contact Heegaard splitting such that the knot lies on the Heegaard surface (cf.\ Corollary 4.23 in \cite{Etnyre2004}).

We give a formula to calculate the Thurston--Bennequin invariant of $K$ in this setting.
Let $n$ denote the genus of the Heegaard surface. We may assume that the solid handlebody $V_1$ consists of a single $0$-handle and $n$ $1$-handles and the solid handlebody $V_2$ consists of $n$ $2$-handles and a single $3$-handle.
Let $g_i, g^*_i$, $i = 1, \ldots, n$, be a set of generators of $H_1(\partial V_1;\Z)$ such that the $g^*_i$ are trivial in $H_1(V_1;\Z)$ and $g_i \bullet g^*_j = \delta_{ij}, g_i \bullet g_j = 0 = g^*_i \bullet g^*_j$, where $\bullet$ denotes the intersection product in $H_1(\partial V_1;\Z)$ (see Figures \ref{fig:notation} and \ref{fig:intersection}).
For ease of notation we will not differentiate between an oriented curve and the homology class it represents.
Furthermore, the Heegaard curves on $\partial V_1$, i.e.\ the images of the attaching spheres $c_i$ of the $2$-handles, are called $c'_i$.
We fix orientations of $K$ and of the $c_i$. This is needed for the calculations, but the results are independent of the particular choice.

\begin{figure}[htb] 
\centering
\def\svgwidth{0.9\columnwidth}
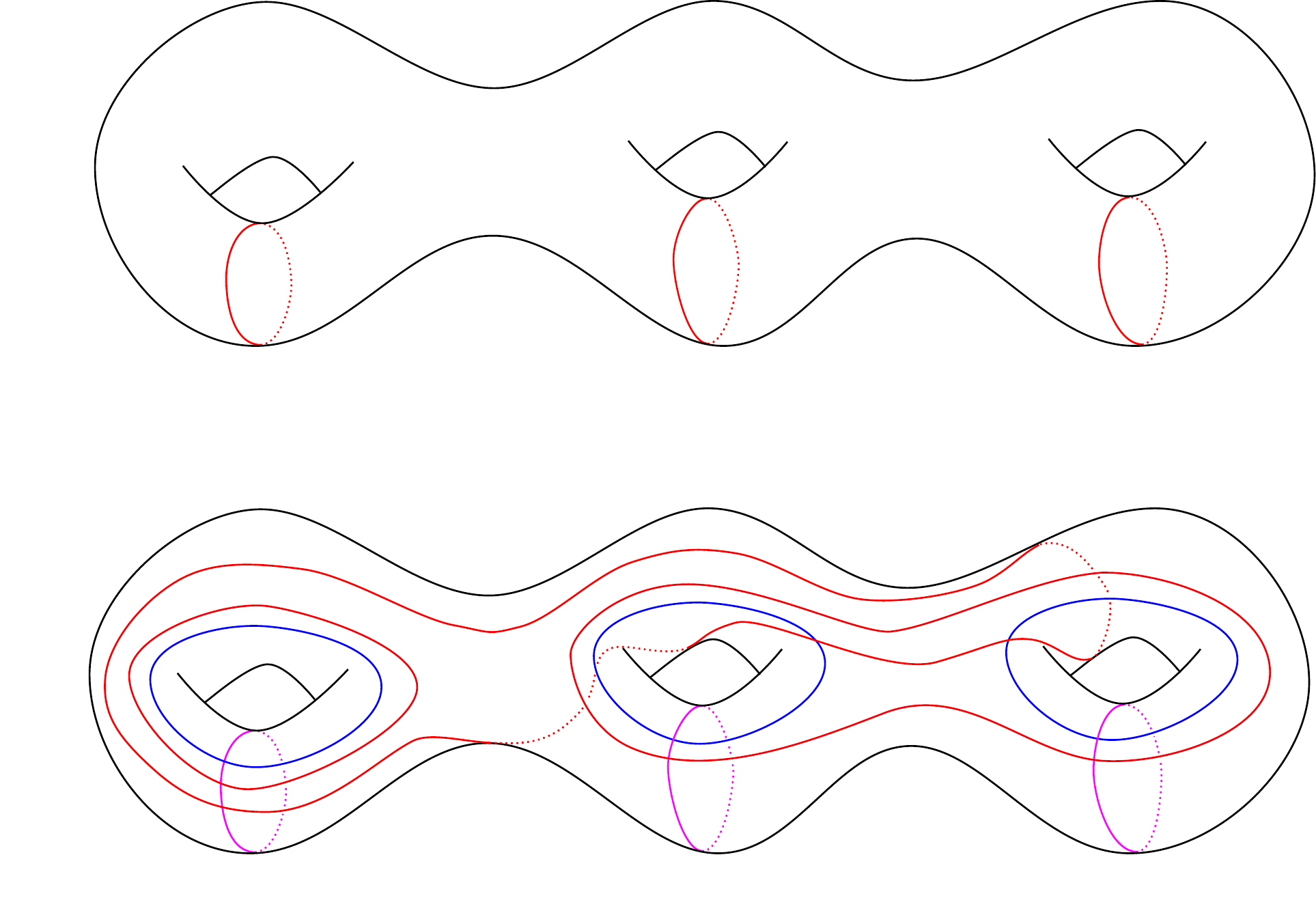
\caption{A Heegaard diagram of $S^1\times S^2$}
\label{fig:notation}
\end{figure}	

\begin{figure}[htb] 
\centering
\def\svgwidth{0.7\columnwidth}
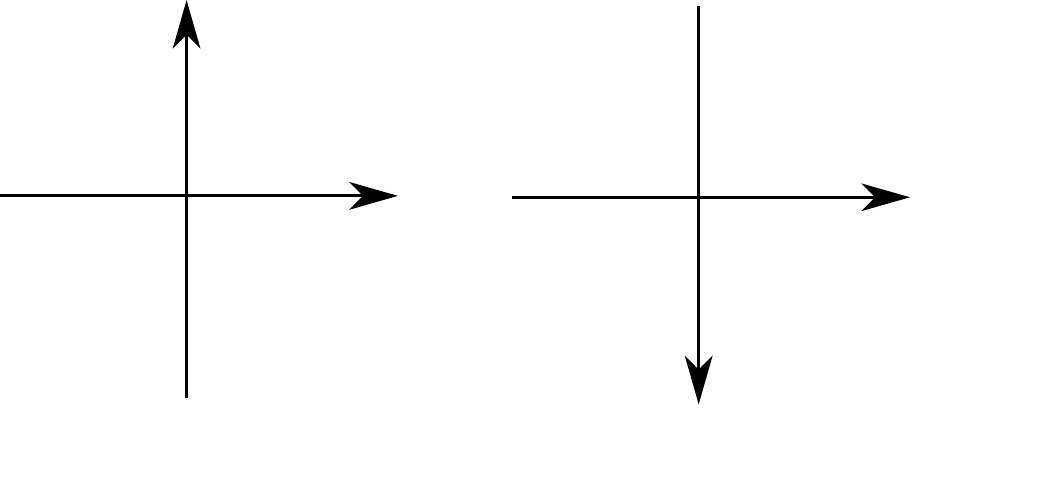
\caption{The intersection pairing in $\mathbb{R}^2$ with standard orientation}
\label{fig:intersection}
\end{figure}

Observe that $H_1(M;\Z)$ is generated by the $g_i$ and there is a relation for every Heegaard curve $c'_j$ (whose expression in terms of the generators can be read off by counting intersections of $c'_j$ with the $g^*_i$, i.e.\ $c'_j = \sum(c'_j\bullet g^*_i) g_i$, cf.\ Chapter 9 in \cite{Rolfsen2003}). Throughout this paper we will omit the coefficient group $\Z$ and all homology groups are understood to be integral if not stated otherwise.
So we have the presentation
$$
H_1(M) = \langle g_1,\ldots, g_n \;|\; c'_1,\ldots, c'_n \rangle.
$$
A knot is nullhomologous in $M$ if and only if its class is a linear combination of the relations in $H_1(M)$ over the integers, i.e. as a class in  $H_1(V_1)$ we can write the nullhomologous knot $K$ as
$$
K = \sum_{i=1}^{n}{E_i c'_i}
$$
for appropriate integers $E_i$.

\begin{theorem}
The Thurston--Bennequin invariant of the Legendrian nullhomologous knot $K$ lying on a Heegaard surface in convex position, transversely intersecting its dividing set, computes as
$$
\tb(K) = -\frac{1}{2}|K \cap \Gamma| + \sum_{i=1}^{n}{E_i \cdot (K \bullet c'_i)}.
$$
\label{thm:tb_surface}
\end{theorem}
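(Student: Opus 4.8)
The plan is to split the Thurston--Bennequin invariant into two framing comparisons. Recall that $\tb(K)$ is the difference between the contact framing $\mathrm{Fr}_\xi$ and the Seifert framing $\mathrm{Fr}_S$ of $K$, each regarded as a longitude on $\partial\nu(K)$; writing the difference of two longitudes as an integer multiple of the meridian $\mu$ turns this into an integer. I would insert the \emph{surface framing} $\mathrm{Fr}_\Sigma$ induced by the Heegaard surface $\Sigma=\partial V_1$ as an intermediate reference and write
$$
\tb(K)=\bigl(\mathrm{Fr}_\xi-\mathrm{Fr}_\Sigma\bigr)+\bigl(\mathrm{Fr}_\Sigma-\mathrm{Fr}_S\bigr).
$$
The first summand is contact-geometric and is handled by convex surface theory; the second is purely homological and is exactly where the Heegaard data enters.

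For the first summand, since $K$ is Legendrian on the convex surface $\Sigma$ and meets the dividing set $\Gamma$ transversely, the standard computation of the twisting of the contact framing relative to the surface framing on a convex surface (Kanda's formula) gives $\mathrm{Fr}_\xi-\mathrm{Fr}_\Sigma=-\tfrac12\,|K\cap\Gamma|$. This accounts for the first term of the claimed formula and amounts to quoting convex surface theory; the orientation of $K$ is irrelevant here because intersection points with $\Gamma$ are counted unsigned.

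For the second summand I would build an explicit Seifert surface from the handle decomposition. By hypothesis $K=\sum_i E_i c'_i$ in $H_1(V_1)$, so $K-\sum_i E_i c'_i$ bounds a surface $S_1\subset V_1$; since each Heegaard curve $c'_i$ bounds the core disk $D_i$ of the corresponding $2$-handle of $V_2$, the chain $F=S_1+\sum_i E_i D_i$ satisfies $\partial F=K$ and hence realizes the Seifert framing. The framing difference $\mathrm{Fr}_\Sigma-\mathrm{Fr}_S$ equals the linking number of the surface push-off $\lambda_\Sigma$ with $K$, which is the algebraic intersection $\lambda_\Sigma\cdot F$. Pushing $\lambda_\Sigma$ off $\Sigma$ into $V_2$ makes it disjoint from $S_1$, so only the disks $D_i$ contribute, and each contributes $\lambda_\Sigma\cdot D_i=\lambda_\Sigma\bullet c'_i=K\bullet c'_i$, because a curve pushed into $V_2$ meets the core disk $D_i$ algebraically as often as it meets $\partial D_i=c'_i$ on $\Sigma$. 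Summing gives $\mathrm{Fr}_\Sigma-\mathrm{Fr}_S=\sum_i E_i\,(K\bullet c'_i)$, the second term. Equivalently, one may phrase this by computing $H_1(M\setminus\nu(K))$ from the Heegaard diagram, in which the meridian $\mu$ spans a free $\Z$-summand on which the Seifert longitude vanishes, and reading off the coefficient of $\mu$ in the class of $\lambda_\Sigma$; this is the route flagged in the introduction.

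The main obstacle is the bookkeeping in the second step: one must pin down the orientation and sign conventions fixed in Figure \ref{fig:intersection} so that the surface intersection number $\lambda_\Sigma\cdot D_i$ equals $K\bullet c'_i$ with the correct sign, verify that the push-off $\lambda_\Sigma$ may be taken disjoint from $S_1$ after pushing into $V_2$, and confirm that the intermediate surface framing is independent of the auxiliary choices (orientations of $K$ and of the $c_i$), so that the two contributions combine to the stated formula. The first step is comparatively routine once the relevant convex-surface statement is quoted.
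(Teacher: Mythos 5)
Your proposal is correct and takes essentially the same route as the paper: the paper likewise reduces to the case $K\cap\Gamma=\emptyset$ by quoting that the contact and surface framings of a Legendrian curve on a convex surface differ by $\tfrac12|K\cap\Gamma|$, and then compares the Heegaard framing with the Seifert framing homologically. Its computation presents $H_1(M\setminus\nu K)$ with relations $\widetilde{c}'_i-(K\bullet c'_i)\mu$ coming from the punctured core disks of the $2$-handles --- precisely the ``equivalent phrasing'' you flag at the end --- so your explicit Seifert chain $F=S_1+\sum_i E_i D_i$ is the geometric shadow of the same argument.
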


\begin{proof}
First we consider the case in which $K$ does not intersect the dividing set $\Gamma$ of the convex Heegaard surface. Then the contact framing of $K$ coincides with the \emph{Heegaard framing},
i.e.\ the framing induced by a parallel copy of $K$ on the Heegaard surface.
We want to use the above presentation of $H_1(M)$ to construct a presentation of $H_1(M\setminus \nu K)$, where $\nu K$ denotes a tubular neighbourhood of $K$ in $M$. To that end we slightly push the curves $g_i$ and $c'_i$ into the handlebody $V_1$ in a neighbourhood of the intersection points with $K$ and denote the resulting curves by $\widetilde{g_i}$ and $\widetilde{c}'_i$ (see Figure \ref{fig:complement}).
Let $\mu$ be a positive meridian of $K$ in $M$. Then $H_1(M\setminus \nu K)$ is generated by $\mu$ together with the $\widetilde{g_i}$ and the relations are $\widetilde{c}'_i - (K \bullet c'_i)\mu$,
$$
H_1(M\setminus \nu K) = \langle \widetilde{g_1},\ldots, \widetilde{g_n}, \mu \;|\; \widetilde{c}'_1 - (K \bullet c'_1)\mu, \, \ldots,\, \widetilde{c}'_n - (K \bullet c'_n)\mu \rangle.
$$

\begin{figure}[htb] 
\centering
\def\svgwidth{0.93\columnwidth}
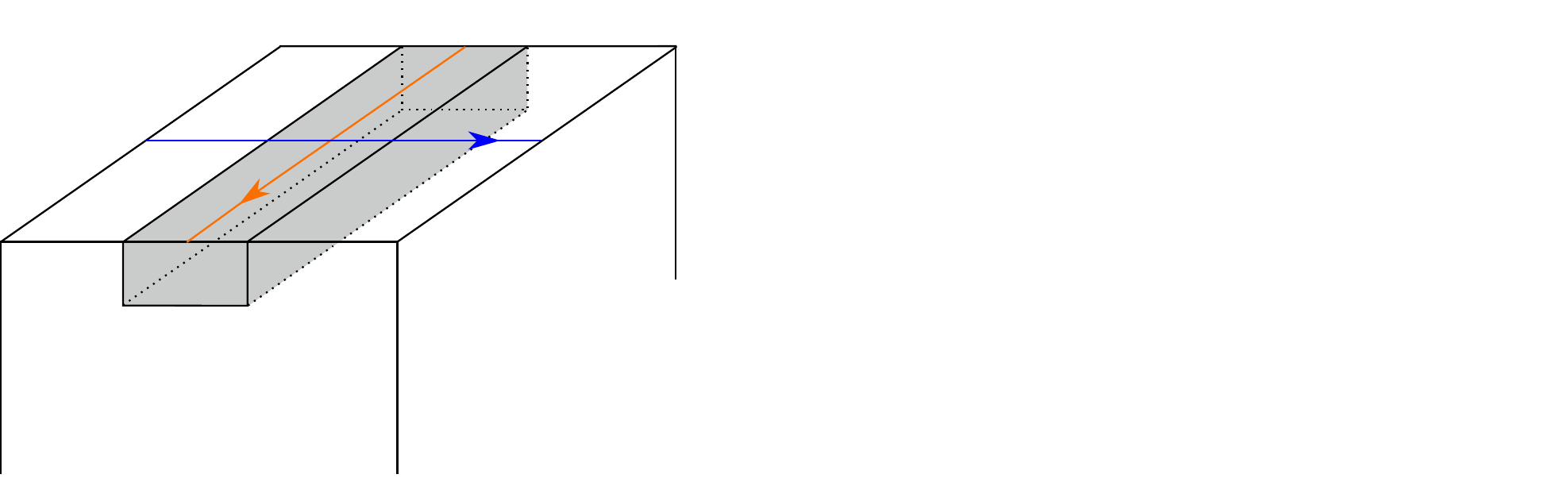
\caption{The relation of the generators in $M$ and $M\setminus \nu K$}
\label{fig:complement}
\end{figure}

Let $\lambda_c$ denote the contact longitude and $\lambda_s$ the Seifert longitude of $K$ (cf.~Section~3.5 in \cite{Geiges2008}). Then the Thurston--Bennequin invariant $\tb(K)$ is defined by the equation
$$\lambda_c = \tb(K) \cdot \mu + \lambda_s$$
in $H_1(\partial\nu K)$.
The Seifert longitude is defined by the condition $\lambda_s = 0$ in $H_1(M\setminus \nu K)$. This yields the equation
$$
-\tb(K) \cdot \mu + \lambda_c = 0 \in H_1(M\setminus \nu K).
$$
In our setting, the contact framing coincides with the Heegaard framing. Therefore the contact longitude $\lambda_c$ is given as a parallel copy of $K$ on the Heegaard surface,
i.e. we have $\lambda_c = K$ in $H_1(\partial V_1)$ and thus $\lambda_c = \sum_{i=1}^{n}{E_i \widetilde{c}'_i}$ in $H_1(M\setminus \nu K)$.
Inserting this expression for the contact longitude into the above equation for the Thurston--Bennequin invariant we get
$$
-\tb(K) \cdot \mu + \sum_{i=1}^{n}{E_i \widetilde{c}'_i} = 0.
$$
Using the relations in $H_1(M\setminus \nu K)$ this transforms to
$$
\tb(K) \mu = \sum_{i=1}^{n}{E_i \widetilde{c}'_i} = \sum_{i=1}^{n}{E_i \widetilde{c}'_i} - \sum_{i=1}^{n}{E_i \big(\widetilde{c}'_i - (K \bullet c'_i)\mu\big)} = \sum_{i=1}^{n}{E_i \cdot (K \bullet c'_i)}\mu.
$$
As the meridian of a nullhomologous knot has infinite order in the knot complement, this proves the first case.

In the general case, when the intersection of $K$ with the dividing set $\Gamma$ is non-empty the result follows from the fact that the contact framing and the framing induced by the Heegaard surface differ by half the number of intersection points of $K$ with the dividing set (cf.\ Theorem 2.30 in \cite{Etnyre}).
\end{proof}

\begin{algorithm}
\label{rem:algorithm}\emph{Computing the Thurston--Bennequin invariant.}
Using the formula from Theorem \ref{thm:tb_surface} we can compute the Thurston--Bennequin invariant of a Legendrian nullhomologous knot lying on a convex Heegaard surface algorithmically.
Define vectors
$$
A := \left( K \bullet g^*_i \right)_{i=1,\ldots,n}
$$
and
$$
I := \left( K \bullet c'_i \right)_{i=1,\ldots,n}
$$
and a matrix
$$
C := \left( c'_j \bullet g^*_i \right)_{i,j=1,\ldots,n}.
$$
Solve the equation
$$
A = C \cdot E
$$
over the integers (such a solution exists exactly if $K$ is nullhomologous). Then the Thurston--Bennequin invariant is given by:
$$
\tb = -\frac{1}{2}|K \cap \Gamma| + \langle E, I \rangle.
$$
\end{algorithm}

\begin{example}
We compute the first homology group of the manifold $M$ given by Figure \ref{fig:notation} as
$$
H_1(M) = \langle g_1, g_2, g_3 \;|\; c'_1, c'_2, c'_3 \rangle = \langle g_1, g_2, g_3 \;|\; g_1, g_1, g_2 + g_3 \rangle \cong \Z,
$$
where we use $c'_j = \sum(c'_j\bullet g^*_i) g_i$. In fact, one can show that $M$ is diffeomorphic to $S^1 \times S^2$.

\begin{figure}[htb] 
\centering
\def\svgwidth{0.9\columnwidth}
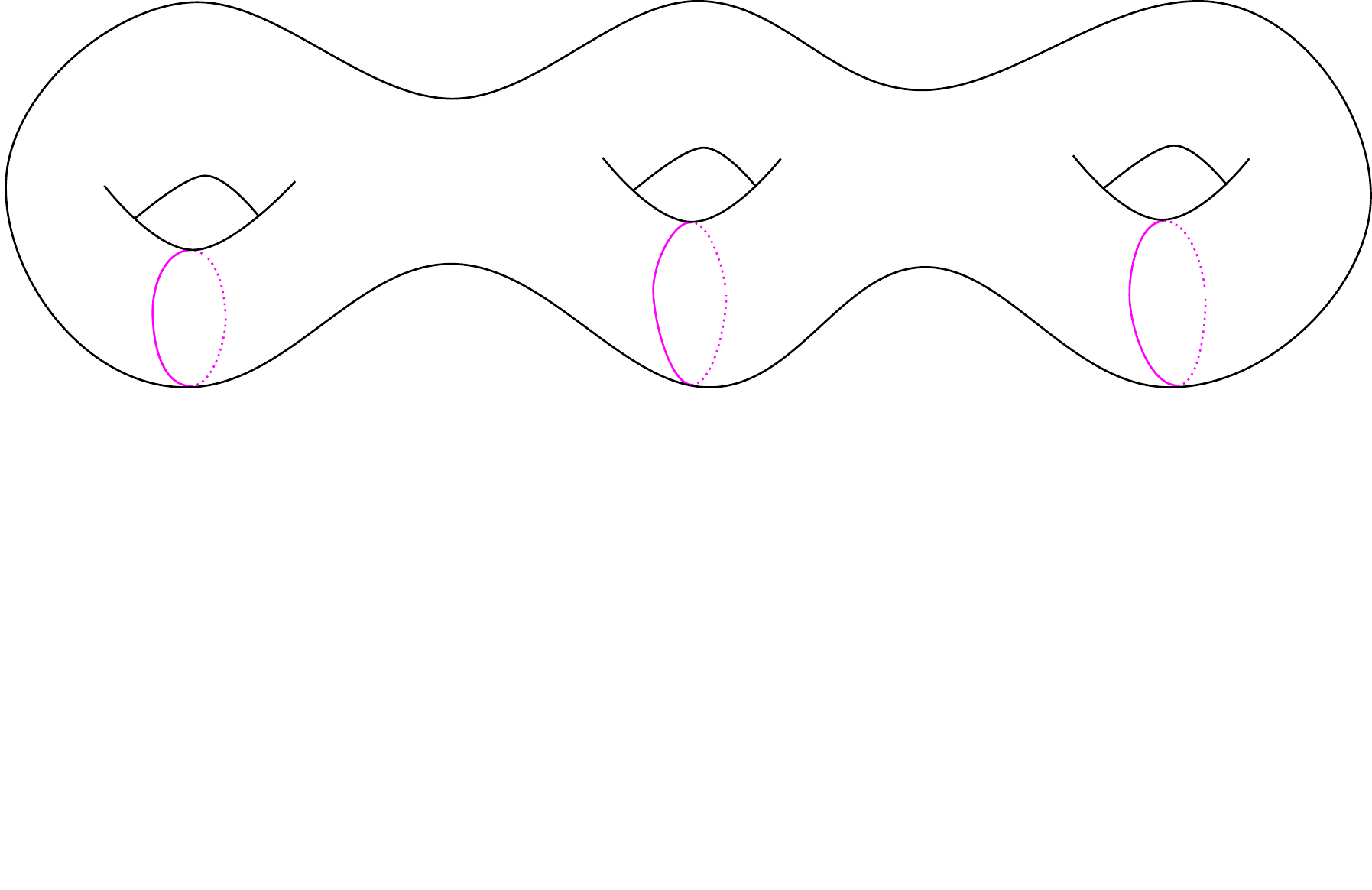
\caption{Knots on a Heegaard surface of $S^1\times S^2$}
\label{fig:heegaardknot}
\end{figure}

Now consider two knots $K_1$ and $K_2$ in $M$ as shown in Figure \ref{fig:heegaardknot}. 
The matrix $C$ is equal to
$$
C = \left( c'_j \bullet g^*_i \right)_{i,j=1,\ldots,3} = \begin{pmatrix} 1 & 1 & 0 \\ 0 & 0 & 1\\ 0 & 0 & 1 \end{pmatrix}
$$
and the knots are encoded by
$$
A_1 = \left( K_1 \bullet g^*_i \right)_{i=1,\ldots,3} = ( 2, 1, 1 )^\intercal.
$$
and
$$
A_2 = \left( K_2 \bullet g^*_i \right)_{i=1,\ldots,3} = ( 0, 2, 1 )^\intercal.
$$
The equation $A_1 = CE$ admits integral solutions, e.g.\ $( 1, 1, 1 )^\intercal$, which means $K_1$ is nullhomologous. However, $A_2 = CE$ is not solvable at all, so $K_2$ is not nullhomologous.
\end{example}

\section{The Thurston--Bennequin invariant in open books}
\label{section:openbook}

In this section we use the result on Heegaard surfaces to give a computable formula for the Thurston--Bennequin invariant of a nullhomologous Legendrian knot on the page of an open book and furthermore a way to check whether a knot on a page is nullhomologous.
Note that it is always possible to find an open book supporting the contact structure such that a given Legendrian knot lies on a page (see Corollary 4.23 in \cite{Etnyre2004}).
Let $(S, \phi = \T^{\varepsilon_l}_{l} \circ \cdots \circ \T^{\varepsilon_1}_{1} )$ be a contact open book with monodromy $\phi$ encoded by a concatenation of Dehn twists. Here $\T^{\varepsilon_k}_{k}$ denotes a Dehn twist along the curve $T_k$ with sign $\varepsilon_k$. Let $(M,\xi)$ be the resulting contact manifold.
Choose an arc basis $a_i$, $i=1,\ldots,n$, i.e.\ a system of arcs such that $S$ becomes a disk when cutting along them, in such a way that the arcs meet the curves $T_k$ transversely.
Using the intersection product on $S$ we define a matrix $C$ via
\begin{equation*}
\resizebox{1 \textwidth}{!} 
{
$c_{ij} := \sum\limits_{m=1}^{l}\; \sum\limits_{1\leq k_1 < \ldots < k_m \leq l }{ \varepsilon_{k_1} \cdots \varepsilon_{k_m} (T_{k_m} \bullet T_{k_{m-1}}) \cdots (T_{k_2} \bullet T_{k_1}) (T_{k_1} \bullet a_j) (T_{k_m} \bullet a_i) }.$
}
\end{equation*}

\begin{theorem}
\label{thm:openbook}
Let $(S, \phi = \T^{\varepsilon_l}_{l} \circ \cdots \circ \T^{\varepsilon_1}_{1} )$ be a contact open book with monodromy $\phi$ encoded by a concatenation of Dehn twists and fixed arc basis $a_i$, $i=1,\ldots,n$ of $S$ as above. Let $K$ be a Legendrian knot on $S$. Define a vector $A$ by 
$
A = \left( K \bullet a_i \right)_{i=1,\ldots,n}.
$
\begin{enumerate}
\item
$K$ is nullhomologous if and only if there exists an integer solution $E$ of
$$
A = C \cdot E.
$$
\item
If $K$ is nullhomologous its Thurston--Bennequin invariant is equal to
$$
\tb (K) = - \langle E, A \rangle.
$$
\end{enumerate}
\end{theorem}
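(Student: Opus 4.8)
The plan is to realise the contact open book as a convex Heegaard splitting and then read the statement off from Theorem~\ref{thm:tb_surface}. Write $M=(S\times[0,1])/\!\sim$ with the binding $B=\partial S$ filled in, and split $M=V_1\cup V_2$ along the doubled page $\Sigma=S_{1/2}\cup_B(-S_0)$, where $V_1=S\times[0,\tfrac12]$ and $V_2=S\times[\tfrac12,1]$. Both pieces are handlebodies of genus $n=\rk H_1(S)$, the surface $\Sigma$ is convex with dividing set exactly the binding $B$, and the splitting is adapted to the contact structure supported by the open book. In this picture I would take $g_i^\ast$ to be the curve $\alpha_i=a_i\cup a_i$ obtained by doubling the arc $a_i$ across the two pages; it bounds the disk $a_i\times[0,\tfrac12]$ in $V_1$ and is therefore trivial in $H_1(V_1)$. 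The Heegaard curves are $c_i'=\beta_i=a_i\cup\phi(a_i)$, namely the arc $a_i$ on $S_{1/2}$ closed up by its monodromy image on $S_0$; these bound disks in $V_2$ and are the attaching circles of its $2$-handles.

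Two observations then do most of the work. First, $K$ lies in the interior of a page while the dividing set is the binding, so $K\cap\Gamma=\varnothing$ and the correction term $-\tfrac12|K\cap\Gamma|$ in Theorem~\ref{thm:tb_surface} drops out. Second, both $\alpha_i$ and $\beta_i$ restrict to the \emph{same} arc $a_i$ on the page carrying $K$, while their remaining halves lie on the opposite page and miss $K$. Hence $K\bullet g_i^\ast=K\bullet c_i'=K\bullet a_i=A_i$, so the two vectors appearing in Algorithm~\ref{rem:algorithm} collapse to the single vector $A$ of the present statement.

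It remains to identify the matrix. I would compute $c_j'\bullet g_i^\ast=\beta_j\bullet\alpha_i$ on $\Sigma$: the contribution from $S_{1/2}$ is $a_j\bullet a_i=0$, since an arc basis is interior-disjoint, so up to the orientation sign of $-S_0$ it equals $\pm(\phi(a_j)\bullet a_i)$. To expand $\phi(a_j)$ I would iterate the Picard--Lefschetz formula $(\T_k^{\varepsilon_k})_\ast(x)=x-\varepsilon_k(x\bullet T_k)\,T_k$ through $\phi=\T_l^{\varepsilon_l}\circ\cdots\circ\T_1^{\varepsilon_1}$. Each term of the resulting expansion is indexed by an increasing chain $k_1<\cdots<k_m$ and carries the weight $\varepsilon_{k_1}\cdots\varepsilon_{k_m}(a_j\bullet T_{k_1})(T_{k_1}\bullet T_{k_2})\cdots(T_{k_{m-1}}\bullet T_{k_m})$ on the class $T_{k_m}$; pairing with $a_i$ reproduces exactly the sum defining $c_{ij}$. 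The main obstacle is precisely this bookkeeping, together with pinning down the global sign: one must match the twist convention, the orientation of the reversed page $-S_0$, and the direction in which the monodromy enters $\beta_i$, so that $c_j'\bullet g_i^\ast$ agrees with $-c_{ij}$.

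With these identifications the theorem follows from Theorem~\ref{thm:tb_surface} and Algorithm~\ref{rem:algorithm}. For part~(1), $K$ is nullhomologous exactly when $\bigl(K\bullet g_i^\ast\bigr)=\bigl(c_j'\bullet g_i^\ast\bigr)E$ admits an integer solution; since the left-hand vector is $A$ and the coefficient matrix is $-C$, this is equivalent to solvability of $A=C\cdot E$, the overall sign being irrelevant to solvability. For part~(2), the vanishing of the correction term and the coincidence $I=A$ reduce the Heegaard formula to $\langle E',A\rangle$, where $E'$ solves $\bigl(c_j'\bullet g_i^\ast\bigr)E'=A$; the sign $c_j'\bullet g_i^\ast=-c_{ij}$ turns the solution of this system into $E'=-E$, yielding $\tb(K)=-\langle E,A\rangle$.
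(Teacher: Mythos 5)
Your proposal is correct and follows essentially the same route as the paper: doubling the page into a convex Heegaard surface with $g_i^*=(a_i)_1\cup(a_i)_2$ and $c_i'=(a_i)_1\cup(\phi(a_i))_2$, observing that $K$ misses the dividing set so the correction term vanishes (the paper phrases this as the contact framing agreeing with the page framing), collapsing $A$ and $I$ to the single vector $(K\bullet a_i)$, expanding $\phi(a_j)$ by iterating the Dehn-twist formula, and tracing the overall minus sign to the reversed orientation of the second copy of the page. The only differences are cosmetic matters of presentation.
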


\begin{proof}
Note that if $K$ is a Legendrian knot on $S$ its contact framing coincides with the framing induced by the page $S$ (cf.\ Lemma 3.5 in \cite{Etnyre2004}).
With the chosen arc basis $a_i$, $i=1,\ldots,n$, we get that
$$
( \Sigma := S_{1} \cup S_{2} ,\; g^*_i := (a_i)_{1} \cup (a_i)_{2},\; c'_i :=  (a_i)_{1} \cup (\phi(a_i))_{2} )
$$
is a genus $n =  (2 \cdot \textrm{genus}(S) + r - 1)$ Heegaard diagram for $M$, where $r$ is the number of boundary components of $S$ (this is a slight variation of the classic approach published in \cite{MR2577470}). Here $S_1$ and $S_2$ are two copies of the page $S$, with the orientation of $S_2$ reversed, glued along their boundary, i.e.\ the Heegaard surface $\Sigma$ is the double of $S$, and $(a_i)_{j}$ denotes a copy of $a_i$ on $S_j$ (see Figure \ref{fig:booktoheegaard}). Curves and arcs on $S_2$ are always assumed to be oriented oppositely to their counterparts on $S$ to give rise to oriented curves on $\Sigma$.
Furthermore, the curves $g^*_i$ and $c'_i$ are understood to be slightly isotoped to only have transverse intersections. We identify $S$ with $S_1$, so the knot $K$ lies on $S_1$.

\begin{figure}[htb] 
\centering
\def\svgwidth{0,9\columnwidth}
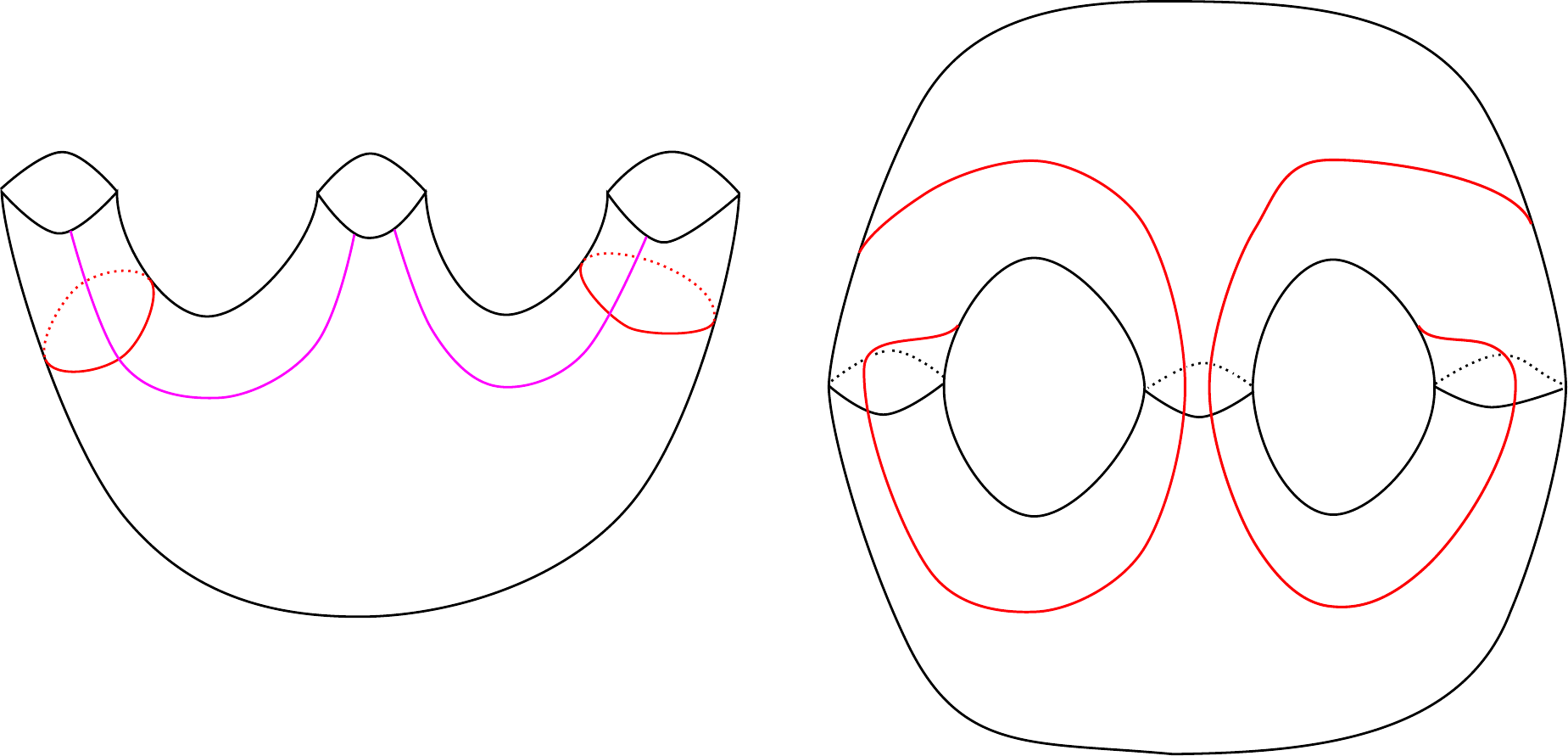
\caption{From an open book decomposition to a Heegaard diagram}
\label{fig:booktoheegaard}
\end{figure}

Having transformed the open book into a Heegaard diagram, Theorem \ref{thm:tb_surface} gives a formula for computing the Thurston--Bennequin invariant. In particular, we will use Algorithm~\ref{rem:algorithm} and adapt it such that it only uses input data from the open book, i.e.\ $\tb$ is computable without constructing a Heegaard diagram first.
We have
$$
A = \left( K \bullet g^*_i \right)_{i=1,\ldots,n} = \left( K \bullet ((a_i)_{1} \cup (a_i)_{2}) \right)_{i=1,\ldots,n} = \left( K \bullet a_i \right)_{i=1,\ldots,n},
$$
where the last equality arises from restriction to $S_1$ as $K$ lies only on $S_1$.
Analogously, the matrix $C$ has entries
$$
c_{ij} = c'_j \bullet g^*_i = ( (a_j)_{1} \cup (\phi(a_j))_{2} ) \bullet ( (a_i)_{1} \cup (a_i)_{2} ) = \phi(a_j) \bullet a_i,
$$
where the last term is again read in $S_2$ and comes from restriction (we isotope the curves such that there are no intersection points on the boundary and consider the algebraic intersection number).
Observe that in our current setting we have 
$$
I = \left( K \bullet c'_i \right)_{i=1,\ldots,n} = A
$$
by a similar argument.

As we have shown in Section \ref{fig:complement} the knot $K$ is nullhomologous if and only if the equation
$$
A = C \cdot E
$$
has an integral solution $E$, and in that case the Thurston--Bennequin invariant computes as
$$
\tb = \langle E, A \rangle.
$$
It remains to calculate the entries of the matrix $C$ in terms of the Dehn twists $\T^{\varepsilon_l}_{l} \circ \cdots \circ \T^{\varepsilon_1}_{1}$ encoding the monodromy.
Let $\alpha$ be any curve on a surface and $T^{\varepsilon}$ a Dehn twist. Then the homology class of the image of $\alpha$ under $T^{\varepsilon}$ is
$$
\alpha + \varepsilon ( T \bullet \alpha ) T,
$$
where we identify curves with their classes as usual.
Repeatedly applying this to the $a_j$ yields
\begin{equation*}
\resizebox{1 \textwidth}{!} 
{
$\phi(a_j) = a_j + \sum\limits_{m=1}^{l} \; \sum\limits_{1\leq k_1 < \ldots < k_m \leq l }{ \varepsilon_{k_1} \cdots \varepsilon_{k_m} (T_{k_m} \bullet T_{k_{m-1}}) \cdots (T_{k_2} \bullet T_{k_1}) (T_{k_1} \bullet a_j) T_{k_m} }$
}
\end{equation*}
and thus
\begin{equation*}
\resizebox{1 \textwidth}{!} 
{
$c_{ij} = \sum\limits_{m=1}^{l} \; \sum\limits_{1\leq k_1 < \ldots < k_m \leq l }{ \varepsilon_{k_1} \cdots \varepsilon_{k_m} (T_{k_m} \bullet T_{k_{m-1}}) \cdots (T_{k_2} \bullet T_{k_1}) (T_{k_1} \bullet a_j) (T_{k_m} \bullet a_i) },$
}
\end{equation*}
where we use the intersection product on $S_2$.
In applications, however, we want to consider intersections on the page $S$,
which has the opposite orientation. This provides for the negative sign in the
formula to compute the Thurston--Bennequin invariant in an open book, i.e.\ we have
$$\tb = - \langle E, A \rangle.$$
\end{proof}

\begin{remark}
Note that in the case of disjoint Dehn twist curves $T_k$ the expression of the matrix entries $c_{ij}$ reduces to
$$
c_{ij} = \sum_{k=1}^{l}{ \varepsilon_{k} (T_{k} \bullet a_j) (T_{k} \bullet a_i) }.
$$
In particular, $C$ is symmetric.
\end{remark}

\begin{remark}
\label{remark:non-unique}
The particular choice of a solution $E$ does not impact the result since two different solutions differ by a vector $K$ in the kernel of $C$ and $A$ is in the image of $C$. Thus, the scalar product of $A$ and $K$ vanishes, see also Example \ref{ex:non-unique}.
\end{remark}

\section{Applications and Examples}
\label{section:applications}

\begin{example}
\emph{Unknot in the standard $3$-sphere.}
Consider the open book decomposition of $(S^3, \xi_\textrm{st})$ with page $S$ an annulus and the monodromy given by a positive Dehn twist $T^{+}$ along the central curve $T$ and let $K$ be a Legendrian knot parallel to $T$ on the page $S$. In this example an arc basis of $S$ consists of a single arc $a$ only, which we choose to be a linear segment joining the boundary components of the annulus.

\begin{figure}[htb] 
\centering
\def\svgwidth{0,9\columnwidth}
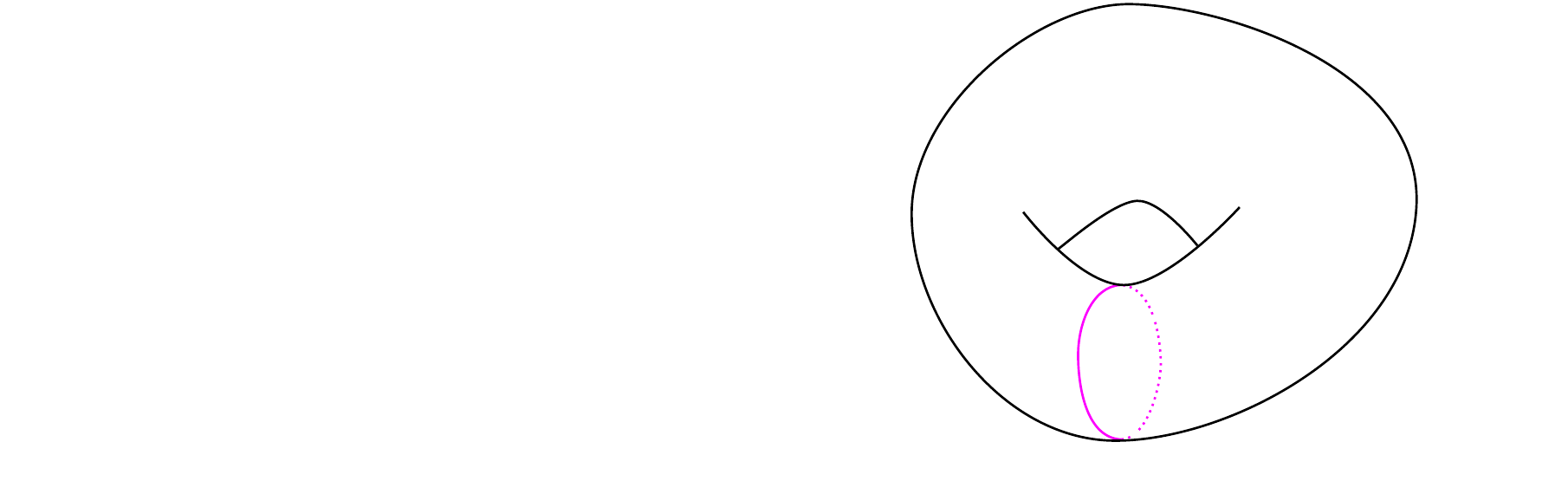
\caption{The Legendrian unknot in  $(S^3, \xi_\textrm{st})$}
\label{fig:sphere}
\end{figure}	

Choosing orientations as depicted in Figure \ref{fig:sphere} we get 
$$
A =  K \bullet a = -1
$$
and
$$
C = \varepsilon ( T \bullet a )^2 = 1 \cdot (-1)^2 = 1.
$$
The knot $K$ is nullhomologous since the equation $-1 = 1 \cdot E$ has the solution $E = -1$. This we knew before since any knot in $S^3$ is nullhomologous, but we need a particular solution $E$ to calculate $\tb$.
We then compute the Thurston--Bennequin invariant as
$$
\tb (K) = - \langle E, A \rangle = -1 \cdot (-1) \cdot (-1) = -1.
$$
The Heegaard diagram  on the right hand side of Figure \ref{fig:sphere} encodes the same situation. Here it becomes clear that $K$ is the unknot.
This particular Heegaard splitting arises from the open book picture on the left by performing a Dehn twist.
\end{example}

\begin{example}
\emph{Unknot in an overtwisted $3$-sphere.}
We change the monodromy in the previous example to be a negative Dehn twist $T^{-}$ along $T$. As above, we then have $A = -1$ but $C$ becomes
$$
C = \varepsilon ( T \bullet a )^2 = -1 \cdot (-1)^2 = -1
$$
and $E=1$ solves $A=CE$. So we get
$$
\tb (K) = - \langle E, A \rangle = -1 \cdot 1 \cdot (-1) = 1.
$$
Stabilising $K$ once yields an overtwisted disc, so the contact structure is indeed overtwisted.
\end{example}

\begin{example}
\label{ex:non-unique}

\begin{figure}[htb] 
\centering
\def\svgwidth{0,6\columnwidth}
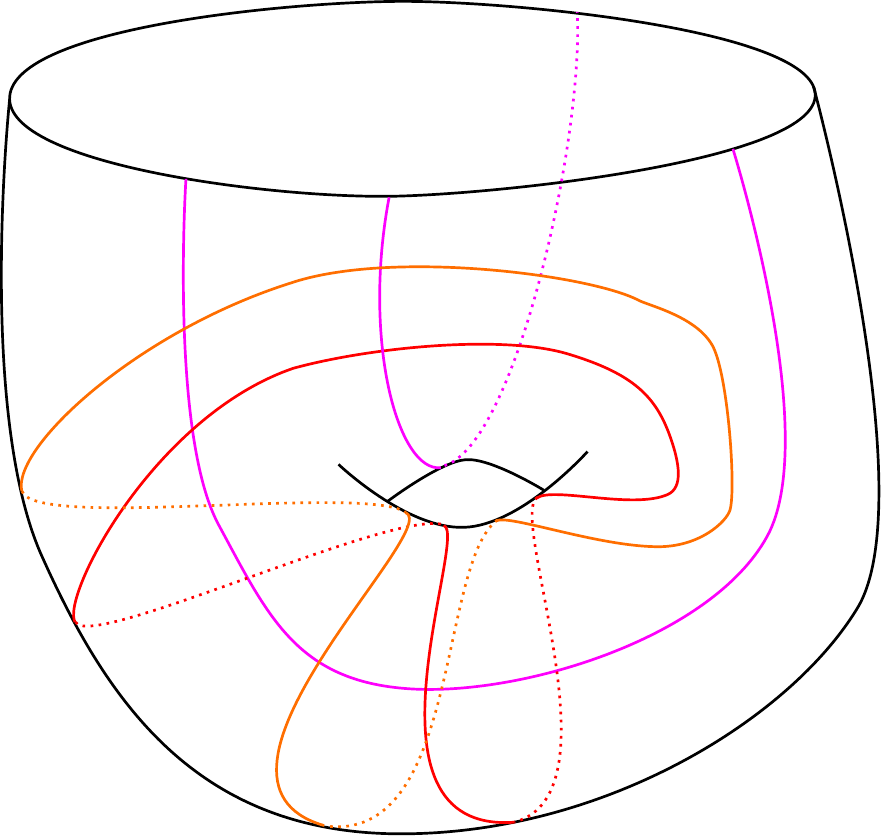
\caption{A nullhomologous knot $K$ with non-unique $E$}
\label{fig:nonunique}
\end{figure}

Consider the open book for $(S^1 \times S^2, \xi_{st})$ depicted in Figure~\ref{fig:nonunique}.
We have
$$
A = \begin{pmatrix} 2\\ 1 \end{pmatrix}
$$
and
$$
C = \begin{pmatrix} 4 & 2\\ 2&  1 \end{pmatrix}.
$$
The equation $A=CE$ is solvable over the integers, so $K$ is nullhomologous. However, the solution is non-unique. Solutions are of the form
$$
E_n = \begin{pmatrix} 2\\ 1 \end{pmatrix} + n \begin{pmatrix} 1\\ -2 \end{pmatrix}
$$
for $n\in\Z$.
Then
$$
\tb (K) = - \langle E_n, A \rangle = -1  \cdot (-1) \cdot (-1) = -1,
$$
i.e.\ the result is independent of the chosen solution $E_n$. This is always the case (see Remark \ref{remark:non-unique}).
\end{example}

\begin{example}
\label{ex:stabilisation}
\emph{Stabilisations.}
Let $K$ be a nullhomologous Legendrian knot on the page $S$ of an open book $(S, \phi = \T^{\varepsilon_l}_{l} \circ \cdots \circ \T^{\varepsilon_1}_{1} )$. We want to compute the Thurston--Bennequin invariant of the stabilised knot $K_\textrm{stab}$ in the stabilised open book $(S_\textrm{stab}, \phi_\textrm{stab} = \T^{\varepsilon_{l+1}}_{l+1} \circ \T^{\varepsilon_l}_{l} \circ \cdots \circ \T^{\varepsilon_1}_{1} )$.
Let $A, C, E$ be the data associated to the original open book and knot. With an additional arc $a$ and orientations chosen as in Figure \ref{fig:stabilisation}, we have 
$$
A_\textrm{stab} = \begin{pmatrix} A\\ 1 \end{pmatrix}
$$
and
$$
C_\textrm{stab} = \begin{pmatrix} C & 0 \\ 0 & \varepsilon_{l+1} \end{pmatrix}
$$
since $\T_{l+1}$ is disjoint from the other Dehn twists.
The equation $A_\textrm{stab} = C_\textrm{stab} E_\textrm{stab}$ is then solved by the integral vector
$$
E_\textrm{stab} = \begin{pmatrix} E\\ \varepsilon_{l+1} \end{pmatrix}
$$
and we compute $\tb$ to be
\begin{equation*}
\resizebox{1 \textwidth}{!} 
{
$\tb (K_\textrm{stab}) = - \langle E_\textrm{stab}, A_\textrm{stab} \rangle = - \big\langle \begin{pmatrix} E\\ \varepsilon_{l+1} \end{pmatrix}, \begin{pmatrix} A\\ 1 \end{pmatrix} \big\rangle = - \langle E, A \rangle -\varepsilon_{l+1} = \tb (K) -\varepsilon_{l+1}.$
}
\end{equation*}

\begin{figure}[htb] 
\centering
\def\svgwidth{0,9\columnwidth}
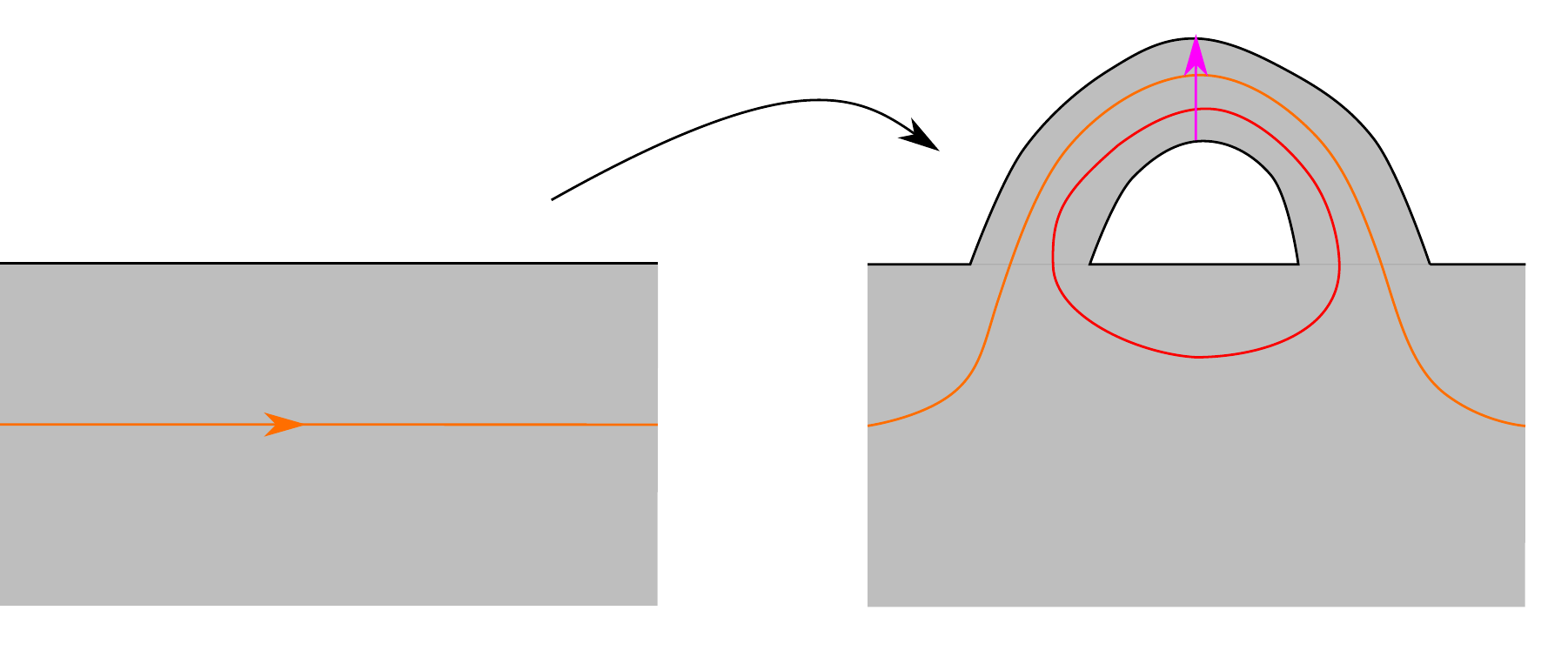
\caption{A stabilisation of $K$ obtained by a positive stabilisation of the open book}
\label{fig:stabilisation}
\end{figure}

\end{example}

\section{Rationally nullhomologous knots}
\label{rational}

In this section we study  rationally nullhomologous Legendrian knots as proposed in Baker-Grigsby \cite{MR2552000}, Baker-Etnyre \cite{MR2884030} and Geiges-Onaran \cite{MR3338830}.
In particular, we generalise Theorems \ref{thm:tb_surface} and \ref{thm:openbook} to rationally nullhomologous Legendrian knots.
Let $K$ be a knot in $M$. We call $K$ \emph{rationally nullhomologous} if its homology class is of finite order $d>0$ in $H_1(M)$.
Let $\nu K$  be a tubular neighbourhood of $K$ and denote the meridian by $\mu \subset \partial\nu K$.

\begin{definition}
A \emph{Seifert framing} of a rationally nullhomologous knot $K$ of order $d$ is a class $r \in H_1(\partial \nu K)$ such that
\begin{itemize}
\item $\mu \bullet r = d$,
\item $r = 0$ in $H_1(M \setminus \nu K)$.
\end{itemize}
\end{definition}

It is obvious that every rationally nullhomologous knot has a Seifert framing;
uniqueness however is not obvious.

\begin{lemma}
The Seifert framing of a rationally nullhomologous knot is unique.
\end{lemma}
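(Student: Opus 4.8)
The plan is to compare two Seifert framings $r_1, r_2 \in H_1(\partial \nu K)$ directly on the boundary torus and to reduce uniqueness to the single claim that the meridian $\mu$ has infinite order in $H_1(M\setminus \nu K)$. First I would record that $\partial \nu K$ is a torus, so $H_1(\partial \nu K)\cong \Z^2$, and that $\mu$ is a primitive class: completing it to a basis $\{\mu,\lambda\}$ with $\mu\bullet\lambda=1$, any $v\in H_1(\partial\nu K)$ with $\mu\bullet v=0$ is forced to be an integer multiple of $\mu$. Applying this to $v=r_1-r_2$ and using the first defining property $\mu\bullet r_1 = d = \mu\bullet r_2$, I obtain $\mu\bullet(r_1-r_2)=0$ and hence $r_1-r_2 = k\mu$ for some $k\in\Z$.

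Next I would invoke the second defining property. Since both $r_1$ and $r_2$ vanish in $H_1(M\setminus\nu K)$, their difference does too, so $k\mu = 0$ in $H_1(M\setminus\nu K)$. Thus the whole statement reduces to showing that $\mu$ has infinite order in $H_1(M\setminus\nu K)$, for then $k=0$ and $r_1=r_2$.

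The main work is this last claim, and here is where rational nullhomology enters. Suppose for contradiction that $n\mu = 0$ in $H_1(M\setminus\nu K)$ for some $n\neq 0$. Then $n$ coherently oriented parallel copies of $\mu$ bound an oriented surface $\Sigma'$ in $M\setminus\nu K$; capping each boundary meridian with a meridional disc of $\nu K$ produces a closed oriented surface $S\subset M$ meeting $K$ transversely in exactly $n$ points counted with sign, so that the intersection number satisfies $S\bullet K = \pm n$. On the other hand, the algebraic intersection number of a closed surface with $K$ depends only on $[K]\in H_1(M)$, and since $[K]$ has finite order $d$ we get $d\,(S\bullet K) = S\bullet (d[K]) = 0$, whence $S\bullet K = 0$. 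This forces $n=0$, a contradiction, so $\mu$ indeed has infinite order.

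The only delicate point is to set up the capping construction and the intersection count cleanly; equivalently, one may phrase the argument through the long exact sequence of the pair $(M, M\setminus\nu K)$, where excision gives $H_2(M, M\setminus\nu K)\cong\Z$ generated by the meridional disc and the connecting homomorphism to $H_1(M\setminus\nu K)$ sends this generator to $\mu$. The torsion of $[K]$ makes the incoming map from $H_2(M)$ vanish (its composite to $H_2(M,M\setminus\nu K)\cong\Z$ is intersection with the $d$-torsion class $[K]$), so the connecting homomorphism is injective and $\mu$ has infinite order. This is the step I expect to carry the real content; the rest is the routine torus bookkeeping of the first two paragraphs.
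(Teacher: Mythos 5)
Your proof is correct, and its skeleton is the same as the paper's: write both framings in a basis $\mu,\lambda$ of $H_1(\partial\nu K)$, use $\mu\bullet r_i=d$ to see that the difference is a multiple of $\mu$, and reduce everything to the claim that $\mu$ is not a torsion element of $H_1(M\setminus\nu K)$. The only place the two arguments diverge is in how that claim is justified. The paper dispatches it in one sentence by pairing $\mu$ against a rational Seifert surface: the meridian meets such a surface nontrivially, so it cannot be torsion. You instead argue dually, either by capping off $n$ copies of $\mu$ with meridional discs to get a closed surface whose algebraic intersection with the $d$-torsion class $[K]$ must vanish, or more cleanly via the long exact sequence of the pair $(M,M\setminus\nu K)$, where excision identifies $H_2(M,M\setminus\nu K)\cong\Z$ with generator the meridional disc and torsion of $[K]$ kills the map from $H_2(M)$, forcing the connecting homomorphism to be injective. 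Your version is more self-contained, since it does not presuppose the existence of a rational Seifert surface (whose construction is essentially the same long-exact-sequence computation); the paper's version is shorter because it leans on that standard fact. Both are complete proofs.
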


\begin{proof}
Let $r_1$ and $r_2$ be Seifert framings. Let $\mu, \lambda$ be an oriented basis of $H_1(\partial \nu K)$, where $\mu$ is represented by a meridian of $K$.
Then we can write
$$
r_i = p_i \mu + q_i \lambda.
$$ 
As $r_i$ is a Seifert framing we have $q_i = d$ with $d$ the order of $K$.
The classes $r_1$ and $r_2$ are equal if considered in $H_1(M \setminus \nu K)$. Therefore we have $p_1 \mu = p_2 \mu$ in $H_1(M \setminus \nu K)$.
But a meridian of $K$ intersects a rational Seifert surface non-trivially, so $\mu$ cannot be a torsion element. Hence $p_1 = p_2$, i.e.\ the framings coincide.
\end{proof}

Existence and uniqueness of the Seifert framing enables us to define a rational Thurston--Bennequin invariant, which coincides with the usual definition in the nullhomologous case, and is well-defined in arbitrary contact $3$-manifolds.

\begin{definition}
The \emph{rational Thurston--Bennequin invariant} of a rationally nullhomologous Legendrian knot $K$ is defined as
$$
\tb_\Q (K) = \frac{1}{d} \left( \lambda_c \bullet r \right)
$$
where $\lambda_c$ denotes the contact longitude and $r$ the Seifert framing, and the intersection is taken in $H_1(\partial \nu K)$.
\end{definition}

Observe that this means that we have the equality
$$
r = d\lambda_c - d \tb_\Q (K) \mu
$$
in $H_1(\partial \nu K)$.

Now consider a Legendrian knot $K$ on a convex Heegaard surface not intersecting the dividing set. Using the notation from Section \ref{section:heegaard}, such a knot is rationally nullhomologous of order $d$ in $M$ if and only if the equation
$$
dA = C \cdot E
$$
admits a solution $E$ over the integers and $d$ is the minimal natural number for which a solution exists.
In that case, fix a solution $E$.
Analogously to the nullhomologous case we then have
$$
d \tb_\Q(K) \mu = \sum_{i=1}^{n}{E_i \widetilde{c}'_i} = \sum_{i=1}^{n}{E_i \cdot (K \bullet c'_i)}\mu
$$
in $H_1(M \setminus \nu K)$.
Since $\mu$ has infinite order we thus proved the following theorem.

\begin{theorem}
The rational Thurston--Bennequin invariant of the Legendrian rationally nullhomologous knot $K$ of order $d$ lying on a convex Heegaard surface, transversely intersecting its dividing set $\Gamma$, computes as
$$
\tb_\Q(K) = -\frac{1}{2}|K \cap \Gamma| + \frac{1}{d} \sum_{i=1}^{n}{E_i \cdot (K \bullet c'_i)}
= -\frac{1}{2}|K \cap \Gamma| + \frac{1}{d} \langle E, I \rangle .
$$
\end{theorem}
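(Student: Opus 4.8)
The plan is to mirror the proof of Theorem \ref{thm:tb_surface}, carrying the order $d$ through the homology computation and replacing the defining equation for the framings by its rational analogue. First I would reduce to the case where $K$ is disjoint from the dividing set $\Gamma$, so that the contact framing agrees with the Heegaard framing and hence $\lambda_c = K$ as a class in $H_1(\partial V_1)$. The rational nullhomology hypothesis, together with the presentation of $H_1(M)$ from Section \ref{section:heegaard}, translates into the relation $dK = \sum_{i=1}^{n} E_i c'_i$ in $H_1(V_1)$, where $E$ is a fixed integer solution of $dA = C\cdot E$ with $d$ minimal; this is precisely the criterion for $K$ to have order $d$ recorded above.

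Next I would pass to the knot exterior. Pushing the curves into $V_1$ as in the proof of Theorem \ref{thm:tb_surface} yields the presentation of $H_1(M\setminus\nu K)$ with relations $\widetilde{c}'_i = (K\bullet c'_i)\mu$. From $dK = \sum_i E_i c'_i$ on the surface I obtain $d\lambda_c = \sum_{i=1}^{n} E_i \widetilde{c}'_i$ in $H_1(M\setminus\nu K)$, and substituting the relations collapses this to $d\lambda_c = \big(\sum_i E_i (K\bullet c'_i)\big)\mu = \langle E, I\rangle\,\mu$.

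I would then invoke the definition of $\tb_\Q$ in the form $r = d\lambda_c - d\tb_\Q(K)\mu$ together with the Seifert-framing condition $r = 0$ in $H_1(M\setminus\nu K)$, which forces $d\tb_\Q(K)\mu = d\lambda_c = \langle E, I\rangle\,\mu$. The crucial point, supplied by the preceding Lemma, is that a meridian of a rationally nullhomologous knot meets a rational Seifert surface non-trivially and is therefore not a torsion element of $H_1(M\setminus\nu K)$; this lets me cancel $\mu$ and conclude $\tb_\Q(K) = \frac{1}{d}\langle E, I\rangle$.

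Finally, for the general case I would account for the intersections with $\Gamma$ exactly as in the integral theorem: the contact framing and the Heegaard framing differ by $\frac{1}{2}|K\cap\Gamma|$ by Theorem 2.30 in \cite{Etnyre}, contributing the additive correction $-\frac{1}{2}|K\cap\Gamma|$. The one thing to be careful about is keeping the order $d$ correctly distributed, since it enters both through the relation $dK = \sum_i E_i c'_i$ and through the definition of $\tb_\Q$; the two occurrences must match so that, after the non-torsion cancellation of $\mu$, a single factor $1/d$ survives in the answer rather than $1$ or $1/d^{2}$.
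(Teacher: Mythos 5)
Your proposal is correct and follows essentially the same route as the paper, which proves this theorem by running the argument of Theorem \ref{thm:tb_surface} with the relation $dK=\sum_i E_i c'_i$, obtaining $d\tb_\Q(K)\mu=\sum_i E_i\widetilde{c}'_i=\langle E,I\rangle\mu$ in $H_1(M\setminus\nu K)$ and cancelling $\mu$ because it has infinite order. Your attention to how the single factor $1/d$ survives, and the reduction to the case $K\cap\Gamma=\emptyset$ via the framing correction, match the paper's (much terser) treatment exactly.
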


Similarly, Theorem \ref{thm:openbook} generalises to the result stated below.

\begin{theorem}
Let $(S, \phi = \T^{\varepsilon_l}_{l} \circ \cdots \circ \T^{\varepsilon_1}_{1} )$ be a contact open book with monodromy $\phi$ encoded by a concatenation of Dehn twists and fixed arc basis $a_i$, $i=1,\ldots, n,$ of $S$. Let $K$ be a Legendrian knot on $S$. Define a vector $A$ by 
$
A = \left( K \bullet a_i \right)_{i=1,\ldots,n}.
$
\begin{enumerate}
\item
$K$ is rationally nullhomologous of order $d$ if and only if there exists an integer solution $E$ of
$$
dA = C \cdot E
$$
and $d$ is the minimal natural number for which a solution exists.
\item
If $K$ is rationally nullhomologous of order $d$ its rational Thurston--Bennequin invariant is equal to
$$
\tb_\Q (K) = - \frac{1}{d} \langle E, A \rangle.
$$
\end{enumerate}
\end{theorem}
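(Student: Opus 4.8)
The plan is to follow exactly the template established in the proof of Theorem~\ref{thm:openbook}, making only the modifications forced by passing from the nullhomologous to the rationally nullhomologous setting. The key observation is that the entire construction transforming the open book into a Heegaard diagram is insensitive to whether $K$ is nullhomologous or merely rationally so; the triple $(\Sigma, g_i^*, c_i')$ is still a genus $n$ Heegaard diagram for $M$, the identities $A = (K \bullet g_i^*)_i = (K \bullet a_i)_i$ and $c_{ij} = \phi(a_j) \bullet a_i$ still hold verbatim, and the matrix $C$ still has entries given by the same Dehn-twist formula. Thus the first half of the proof can be quoted wholesale, and I would begin by simply invoking the already-established Heegaard-diagram reformulation and the computation of the $c_{ij}$.

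The substantive content is then the rational Heegaard-surface statement proved immediately above (the unnamed Theorem preceding this one), which I would use as a black box. First I would note that under the correspondence, the condition for $K$ to be rationally nullhomologous of order $d$ translates into the solvability of $dA = C \cdot E$ over $\Z$ with $d$ minimal, since $A = I$ in this setting (both equal $(K \bullet a_i)_i$ by the same restriction argument used in the nullhomologous case). This establishes part~(1). For part~(2), I would apply the rational Heegaard formula $\tb_\Q(K) = -\tfrac{1}{2}|K \cap \Gamma| + \tfrac{1}{d}\langle E, I\rangle$, using that a Legendrian knot on a page has contact framing equal to the page framing and hence does not intersect the dividing set, so the $|K \cap \Gamma|$ term vanishes. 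With $I = A$ this gives $\tb_\Q(K) = \tfrac{1}{d}\langle E, A\rangle$ in the Heegaard surface $\Sigma = S_1 \cup S_2$.

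Finally I would account for the sign. As in the proof of Theorem~\ref{thm:openbook}, the intersection products computing $C$ are read off on $S_2$, whose orientation is reversed relative to the page $S$ on which we actually perform computations; reorienting to work on $S$ introduces the overall minus sign, yielding
$$
\tb_\Q(K) = -\frac{1}{d}\langle E, A\rangle.
$$
The main obstacle I anticipate is not any new calculation but rather bookkeeping: verifying that the minimality clause for $d$ is preserved under the Heegaard correspondence, i.e.\ that the order of $[K]$ in $H_1(M)$ genuinely equals the least $d$ for which $dA = CE$ has an integer solution. This reduces to the fact that $H_1(M)$ has the presentation with relation matrix $C$, so that $d[K] = 0$ in $H_1(M)$ precisely when $dA$ lies in the image of $C$ over $\Z$; I would confirm this by the same homological argument used to establish the presentation $H_1(M) = \langle g_1,\ldots,g_n \mid c_1',\ldots,c_n'\rangle$ in Section~\ref{section:heegaard}, which carries over without change. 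Once this is in place, the independence of the result from the particular choice of $E$ follows exactly as in Remark~\ref{remark:non-unique}, since two solutions of $dA = CE$ differ by a kernel vector of $C$ and $A$ lies in the image of $C$.
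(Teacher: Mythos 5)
Your proposal is correct and matches the paper's intent exactly: the paper offers no separate proof of this theorem, stating only that Theorem \ref{thm:openbook} ``generalises'' via the rational Heegaard-surface result, which is precisely the substitution you carry out (reusing the Heegaard-diagram translation, $A=I$, the vanishing of the dividing-set term, the orientation-reversal sign, and the Remark \ref{remark:non-unique} argument for independence of $E$). No discrepancies to report.
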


\section*{Acknowledgements}
The authors are very grateful to Hansj\"org Geiges for fruitful discussions and advice and Christian Evers for helpful remarks on a draft version.

\end{document}